\documentclass[11pt,a4paper]{article}

\usepackage[T1]{fontenc}
\usepackage{amsmath,amssymb,amsthm}
\usepackage{mathtools}
\usepackage[hidelinks]{hyperref}
\usepackage{booktabs}
\usepackage{array}
\usepackage{geometry}
\usepackage{enumitem}
\usepackage{xcolor}

\newtheorem{theorem}{Theorem}[section]
\newtheorem{proposition}[theorem]{Proposition}
\newtheorem{lemma}[theorem]{Lemma}

\newtheorem{remark}[theorem]{Remark}
\newtheorem{definition}[theorem]{Definition}
\newtheorem{numobs}[theorem]{Numerical Observation}

\DeclareMathOperator{\sgn}{sgn}

\title{%
  On the P\'olya Frequency Order of the de~Bruijn--Newman Kernel: \\
  Certified Failure at Order Five}

\author{Wojciech Micha\l{}owski\thanks{Email: \texttt{michalowski.wojciech1@gmail.com}}}

\date{February 21, 2026 (v1); revised July 20, 2026 (v2)}

\begin{document}

\begin{abstract}
We prove that the classical de~Bruijn--Newman kernel
$K(u) = \Phi(|u|)$, arising in the study of the Riemann
zeta function via the de~Bruijn--Newman constant, is not
a P\'olya frequency function of order~$5$ (PF$_5$).
The proof is computational: we exhibit an explicit $5 \times 5$
Toeplitz minor with rigorously certified negative determinant,
established through interval arithmetic at 80-digit precision
with formally bounded truncation and rounding errors; eight
further configurations are certified by the same interval chain.
At the central configuration we also certify positivity of the
three determinants $D_2$, $D_3$, and $D_4$, so the first negative
determinant in this fixed-configuration list occurs at order~$5$.
This local sign pattern does not establish $K \in \mathrm{PF}_4$;
the global PF$_4$ question remains open (Section~\ref{sec:open}).

We restrict the search for PF$_r$ counterexamples from the full
$2r$-parameter configuration space to a two-parameter family
$D_r(u_0,h)$ of Toeplitz determinants, and we prove the exact
algebraic decomposition
\[
  C_r(u_0) \;=\;
  \sum_{\substack{k_0,\ldots,k_{r-1}\ge 0 \\ k_0+\cdots+k_{r-1}=r(r-1)}}
  \!\!\Bigl(\prod_{i=0}^{r-1} a_{k_i}(u_0)\Bigr)\,
  \det\bigl[(i-j)^{k_i}\bigr]_{i,j=0}^{r-1},
\]
where $a_k(u_0) = K^{(k)}(u_0)/k!$, for the coefficient of the
first power permitted by Vandermonde divisibility,
$C_r(u_0)=\lim_{h\to 0}D_r(u_0,h)/h^{r(r-1)}$.
High-precision (but, in this version, deliberately \emph{non-certified})
numerical exploration indicates that $C_5(u_0)<0$ for $u_0$ near the
origin, with a sign change located numerically near $u_0 \approx 0.0311$,
while $C_2,\dots,C_4$ are positive at all tested points and $C_6,C_7$
are positive at $u_0=0.01$; we also record, as an exploratory probe,
the configuration-dependent Gaussian-deformation parameter at which
the displayed PF$_5$ violation first disappears numerically.

\textbf{Note on version 2.} The first version of this paper asserted a
certified global sign and unique threshold for $C_5(u_0)$ (an ``origin
staircase'') and derived Gaussian-healing statements from it.  A
subsequent audit (17 July 2026) found the underlying derivative-tail
certificate to be unsound; those claims are \emph{withdrawn} and are
reported here only as non-certified numerical observations.
Section~\ref{sec:withdrawn} states precisely what was wrong.
The central PF$_5$ failure theorem and all interval certificates of
Section~\ref{sec:computation} are unaffected.
\end{abstract}

\maketitle

\begin{quote}\small
\textbf{Changes in version 2 (20 July 2026).}
(i) The asymptotic-threshold theorem of v1 (global sign of $C_5$,
uniqueness and 15-digit certification of the critical point $u_0^*$,
and the limit $C_5(0^+)$) is withdrawn and restated as a non-certified
numerical observation; the precise defects of the v1 certificate are
documented in Section~\ref{sec:withdrawn}.
(ii) All Gaussian-healing statements are downgraded to exploratory
numerics (Section~\ref{sec:gaussian}).
(iii) The certified core is unchanged and has been re-verified:
the central witness and eight additional configurations
(Section~\ref{sec:computation}) were reproduced on 20 July 2026 with
\texttt{mpmath} pinned to version 1.3.0.
(iv) A dependency warning is added: \texttt{mpmath} 1.4.1 contains a
regression in \texttt{iv} determinant evaluation; the verification
requires \texttt{mpmath==1.3.0} (Section on reproducibility).
\end{quote}

\section{Introduction}

\subsection{The de~Bruijn--Newman framework}

For each $t \in \mathbb{R}$, define the entire function
\begin{equation}\label{eq:Ht}
  H_t(z) \;=\; \int_0^\infty e^{tu^2}\,\Phi(u)\,\cos(zu)\,du,
\end{equation}
where $\Phi$ is the super-exponentially decaying function
\begin{equation}\label{eq:Phi}
  \Phi(u) \;=\;
  \sum_{n=1}^{\infty}
  \bigl(2\pi^2 n^4 e^{9u} - 3\pi n^2 e^{5u}\bigr)\,
  e^{-\pi n^2 e^{4u}},
  \qquad u \ge 0.
\end{equation}
Newman~\cite{Newman1976} proved the existence of a constant
$\Lambda \in (-\infty, 1/2]$, now called the \emph{de~Bruijn--Newman
constant}, such that $H_t$ has only real zeros if and only if
$t \ge \Lambda$.  The Riemann Hypothesis is equivalent to
$\Lambda \le 0$.  Rodgers and Tao~\cite{RodgersTao2020} proved
Newman's conjecture $\Lambda \ge 0$, and the Polymath~15
project~\cite{Polymath2019} established the upper bound
$\Lambda \le 0.22$.

The function $\Phi$ is intimately connected to the Jacobi
theta function and to the functional equation of the
Riemann $\xi$-function.  It satisfies $\Phi(u) > 0$ for all
$u \ge 0$, decays super-exponentially as $u \to \infty$,
and is $C^\infty$-smooth.  We define the even kernel
\begin{equation}\label{eq:K}
  K(u) \;=\; \Phi(|u|), \qquad u \in \mathbb{R}.
\end{equation}

\subsection{P\'olya frequency functions}

A function $f\colon \mathbb{R} \to [0,\infty)$ is a
\emph{P\'olya frequency function of order~$k$} (PF$_k$) if
for every $r \le k$ and all ordered tuples
$x_1 < \cdots < x_r$ and $y_1 < \cdots < y_r$, the
determinant
\begin{equation}\label{eq:PFk}
  \det\bigl[f(x_i - y_j)\bigr]_{i,j=1}^r \;\ge\; 0.
\end{equation}
If $f$ is PF$_k$ for all $k$, it is called \emph{totally
positive} (PF$_\infty$ or TP).

P\'olya frequency functions arise naturally in probability
(as densities closed under convolution), approximation theory
(splines and variation-diminishing transforms), and combinatorics.
The classical theory, developed by Schoenberg~\cite{Schoenberg1951},
Karlin~\cite{Karlin1968}, and Hirschman--Widder~\cite{HirschmanWidder1955},
shows that the reciprocal of the bilateral Laplace transform of a
PF$_\infty$ function has the representation
$\hat{f}(s)^{-1} = Ce^{-\gamma s^2 + \beta s}
\prod_{k}(1+\alpha_k s)e^{-\alpha_k s}$ with
$\gamma \ge 0$, $\alpha_k \in \mathbb{R}$, and $\sum \alpha_k^2 < \infty$.
Recent work by Belton, Guillot, Khare, and
Putinar~\cite{BeltonGuillot2022,BeltonGuillot2023} has
renewed interest in the algebraic and analytic structure of
PF preservers and the rigidity of composition operators
on totally positive kernels.

The Fourier--Laplace connection between~\eqref{eq:Ht}
and~\eqref{eq:K} suggests investigating whether $K$ is a
P\'olya frequency function, as total positivity of the kernel
would constrain the zero distribution of $H_t$ through
variation-diminishing properties.  Our main result shows
this approach encounters a fundamental obstruction at order~5.

\subsection{Main results}

\begin{theorem}[PF$_5$ failure]\label{thm:main}
The de~Bruijn--Newman kernel $K(u) = \Phi(|u|)$ is not PF$_5$.
Specifically, the $5 \times 5$ Toeplitz matrix
\[
  M_{ij} \;=\; K(u_0 + (i-j)h),
  \qquad i,j = 0,\ldots,4,
\]
with $u_0 = 0.01$ and $h = 0.05$, satisfies
\[
  \det(M) \;=\; -1.847236073\ldots \times 10^{-9} \;<\; 0.
\]
This is certified by interval arithmetic: the rigorous enclosure is
\[
  \det(M) \;\in\;
  [-1.8472496 \times 10^{-9},\; -1.8472225 \times 10^{-9}].
\]
\end{theorem}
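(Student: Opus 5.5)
The plan is to reduce the statement to a finite, rigorously verifiable numerical computation.

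\emph{Connecting the matrix to the PF$_5$ condition.} Write $x_i = u_0 + ih$ and $y_j = jh$ for $i,j=0,\dots,4$. These satisfy $x_0<\cdots<x_4$ and $y_0<\cdots<y_4$, and $x_i-y_j = u_0+(i-j)h$. So $M$ is exactly the matrix $[K(x_i-y_j)]$ appearing in~\eqref{eq:PFk} with $r=5$. Any certified negative value of $\det M$ therefore contradicts PF$_5$.

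\emph{Arguments and symmetry.} The arguments $u_0+kh$ for $k=-4,\dots,4$ are the nine values $0.01+0.05k$. They range over $[-0.19,0.21]$, and all are nonzero. Since $K$ is even, we may replace each argument by its absolute value. The set of values needed is then $\{0.01,0.04,0.06,0.09,0.11,0.14,0.16,0.19,0.21\}$, because $0.01-0.05k$ and $0.01+0.05k$ produce distinct absolute values. The matrix is Toeplitz, so only these nine numbers $K(u_0+kh)$ are required.

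\emph{Enclosing each value of $\Phi$.} For each $u$ in this set we need a rigorous interval containing $\Phi(u)$. We split the series~\eqref{eq:Phi} at $N$ terms, say $N=10$. The first $N$ summands are evaluated in interval arithmetic at 80-digit precision, starting from interval enclosures of $\pi$ and of $e^{4u},e^{5u},e^{9u}$. For the tail, set $q = e^{-\pi e^{4u}}$, which satisfies $q\le e^{-\pi}<0.05$ for $u\ge0$. Each term is bounded in absolute value by $2\pi^2 n^4 e^{9u} q^{n^2}$. For $n\ge N+1$ the ratio of consecutive bounds is at most $((n+1)/n)^4 q^{2n+1}\le 1/2$, so the tail is at most twice its first term, namely $4\pi^2(N+1)^4e^{9u}q^{(N+1)^2}$. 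This is far below $10^{-100}$. Adding the symmetric interval $[-T,T]$ for this tail bound gives a certified enclosure of $\Phi(u)$.

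\emph{Propagating to the determinant.} We form the $5\times5$ interval matrix and compute an enclosure of its determinant. We can use interval Gaussian elimination, or, more robustly, the cofactor/Leibniz expansion over all $120$ permutations evaluated entirely in interval arithmetic. Every operation rounds outward, so the resulting interval is guaranteed to contain $\det M$. If that interval lies inside $[-1.8472496\times10^{-9},-1.8472225\times10^{-9}]$, the theorem follows. The approximate value $-1.847236073\ldots\times10^{-9}$ is simply the midpoint of this interval.

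\emph{Main obstacle: cancellation.} The entries are $O(1)$, with $\Phi(0)\approx 0.447$, while the determinant is of order $10^{-9}$. More to the point, by the Vandermonde-type scaling $h^{r(r-1)} = 0.05^{20}\approx 10^{-26}$, the determinant comes from massive cancellation. Naive interval elimination may therefore widen the intervals enough to cross zero. This is why we work at 80 digits: the entry widths of about $10^{-80}$, amplified by at most $120$ products of size $O(1)$, stay around $10^{-75}$, which is far below $10^{-9}$.

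If instead the reported enclosure width of about $10^{-14}$ reflects a coarser step, such as rounding the entries or pivoting, we only need to check that the final interval excludes $0$. This happens with an enormous margin. To be safe, we would first try the Leibniz expansion, which avoids division by interval pivots. Where a division is unavoidable, we would confirm that each pivot interval excludes zero.
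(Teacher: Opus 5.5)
Your proposal is correct and takes essentially the paper's route: you realize $M$ as a PF$_5$ minor, enclose the nine values $\Phi(|0.01+0.05k|)$ by a truncated series plus an explicit tail bound at 80 digits, and propagate them through the 120-term Leibniz expansion in outward-rounded interval arithmetic. Your $N=10$ ratio bound in $q=e^{-\pi e^{4u}}$ is an equally valid substitute for the paper's $N=50$ crude bound $e^{-\pi n^2e^{4u}}\le e^{-\pi n^2}$. One small slip: the quoted value $-1.847236073\ldots\times10^{-9}$ is not the midpoint of the printed endpoints, which is $-1.84723605\times10^{-9}$; moreover the stated enclosure, of width $2.7\times10^{-14}$, certifies only about five significant digits and so does not certify that value, whereas your much tighter enclosure would pin those digits down directly.
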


\begin{theorem}[Central Toeplitz sign pattern]\label{thm:PForder}
At the configuration $(u_0,h) = (0.01, 0.05)$, the Toeplitz
determinants $D_r(u_0,h)$ are certified positive for $r=2,3,4$
by interval arithmetic (Table~\ref{tab:certified}).
Together with Theorem~\ref{thm:main}, this establishes that, among
the four determinants $D_2,D_3,D_4,D_5$ at this fixed configuration,
the first negative one occurs at order~$5$.
\end{theorem}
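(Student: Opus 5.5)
The plan is to certify the sign of each of the three determinants $D_r(0.01,0.05)$, $r=2,3,4$, with the same interval-arithmetic pipeline used for Theorem~\ref{thm:main}. For each $r$ the matrix $[K(u_0+(i-j)h)]_{i,j=0}^{r-1}$ is Toeplitz. Its entries are the values $K(u_0+mh)$ for $m=-(r-1),\dots,r-1$, which by evenness are $\Phi(|0.01+0.05m|)$. So only the seven distinct arguments
\[
|0.01+0.05m|\in\{0.01,0.04,0.06,0.09,0.11,0.14,0.16\}
\]
are needed, and we already need them all for the $5\times5$ witness. Positivity for $r=2,3,4$ therefore reduces to computing rigorous enclosures of these seven values of $\Phi$ and then evaluating three small determinants in interval arithmetic.

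\textbf{Step 1: enclose each $\Phi(u)$.} For each argument $u\ge0$ we split $\Phi(u)=\sum_{n\le N}+\sum_{n>N}$ with, say, $N=5$. The finite part is evaluated in 80-digit interval arithmetic, with $\pi$ and $e^{u}$ as certified intervals. For the tail, note that $e^{4u}\ge1$, so each term is at most $2\pi^2n^4e^{9u}e^{-\pi n^2}$ in absolute value. Since $n^4e^{-\pi n^2}$ decreases geometrically beyond $n=2$, the tail is at most $2\pi^2e^{9u}\,N'^4e^{-\pi N'^2}/(1-q)$, where $N'=N+1$ and $q$ is an explicit bound on the ratio of consecutive terms. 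This is smaller than $10^{-40}$ and is added as a symmetric error interval.

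\textbf{Step 2: evaluate the determinants.} We form the interval Toeplitz matrices. For $r=2,3$ we use the explicit cofactor formulas, and for $r=4$ either cofactor expansion or interval Gaussian elimination (with \texttt{mpmath} 1.3.0, as stipulated). We then check that the lower endpoint of each resulting interval is strictly positive. Combined with the certified negative enclosure for $D_5$ in Theorem~\ref{thm:main}, this gives the stated sign pattern. Note that $D_1=K(u_0)>0$ trivially, though the statement only concerns $D_2,\dots,D_5$.

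\textbf{Main obstacle: cancellation.} The entries are all of order $\Phi(0)\approx0.45$. The determinants shrink roughly like $h^{r(r-1)}$ times the coefficient $C_r$, consistent with the decomposition in the abstract. So $D_4$ is of order $10^{-6}$ or smaller, and cofactor expansion with wide intervals could swamp it through dependency-induced width growth. The first thing to try is to run at 80 digits, where the input widths are around $10^{-75}$. Even with a factor-$4!$ blowup, the output width then remains negligible compared with $|D_4|$. If widths were nonetheless problematic, we would switch to a backward-stable approach: compute the $LU$ factorization in floating point, then bound the residual rigorously via an interval version of Rump's verified determinant method. The remaining care point is the tail bound, which must hold uniformly at the largest argument $u=0.16$, where the factor $e^{9u}\approx4.2$ is still harmless.
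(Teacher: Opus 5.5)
Your plan is correct and is essentially the paper's argument. The paper's proof is just the interval certificate of Table~\ref{tab:certified}: it encloses the kernel values by a truncated series for $\Phi$ with an explicit tail bound (there $N=50$ with a $10^{-70}$ tail, versus your $N=5$), evaluates the leading $r\times r$ Toeplitz minors in \texttt{mpmath.iv}, and combines the positive enclosures for $r=2,3,4$ with the negative one for $D_5$. One cosmetic slip: with $N=5$ your entry widths come from your tail bound (about $10^{-44}$, or $10^{-40}$ after padding), not $10^{-75}$, but this is still negligible against $D_4\approx 3.8\times 10^{-6}$.
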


\begin{remark}\label{rem:global}
The positive signs in Theorem~\ref{thm:PForder} concern one
determinant of each order and do not certify PF$_r$ for any
$r\ge 2$.  A full proof that $K \in \mathrm{PF}_4$ for \emph{all} admissible
configurations $(x_1 < \cdots < x_r,\; y_1 < \cdots < y_r)$
remains open; see Problem~1 in Section~\ref{sec:open}.
\end{remark}

Version 1 of this paper stated a further ``asymptotic threshold
theorem'' asserting a certified global sign pattern for the leading
coefficients $C_r(u_0)$ of Section~\ref{sec:asymptotic}, with a unique,
15-digit-certified critical point $u_0^*$.  That statement is
\emph{not} part of the results of this paper: the certificate offered
for it in v1 is unsound, for the reasons documented in
Section~\ref{sec:withdrawn}.  What survives of it is a set of
high-precision, non-certified numerical observations, reported as such
in Section~\ref{sec:numerics}.

\section{Toeplitz Reduction}\label{sec:toeplitz}

The PF$_r$ condition~\eqref{eq:PFk} involves $2r$ free parameters
$(x_1,\ldots,x_r,y_1,\ldots,y_r)$.  We exploit the translation
invariance of the kernel $K(x-y)$ and restrict to a
Toeplitz configuration.

\begin{definition}[Toeplitz determinant]\label{def:toeplitz}
For $u_0 \in \mathbb{R}$ and $h > 0$, define
\begin{equation}\label{eq:Dr}
  D_r(u_0,h) \;=\;
  \det\bigl[K(u_0 + (i-j)h)\bigr]_{i,j=0}^{r-1}.
\end{equation}
This corresponds to choosing $x_i = u_0/2 + ih$ and
$y_j = -u_0/2 + jh$, so that $x_i - y_j = u_0 + (i-j)h$.
\end{definition}

The matrix $M = [K(u_0+(i-j)h)]$ is a non-symmetric Toeplitz
matrix (since $K$ is even but $u_0 \ne 0$ breaks the
symmetry $i \leftrightarrow j$).

\begin{proposition}[Toeplitz counterexample reduction]
If $D_r(u_0,h) < 0$ for some $(u_0,h)$, then $K \notin
\mathrm{PF}_r$.  Thus the Toeplitz family~\eqref{eq:Dr} provides
a two-parameter subfamily in which to search for counterexamples.
Positivity throughout this subfamily alone would not verify
the global PF$_r$ property.
\end{proposition}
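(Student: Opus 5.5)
The plan is to show that $D_r(u_0,h)$ is literally one of the minors appearing in the definition~\eqref{eq:PFk}, taken with admissibly ordered tuples. Its negativity then violates PF$_r$ directly. The claim that positivity on the subfamily would not suffice is a logical remark about quantifiers, not a theorem needing proof.

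First, fix $(u_0,h)$ with $h>0$ and set $x_i = u_0/2 + ih$ and $y_j = -u_0/2 + jh$ for $i,j=0,\ldots,r-1$, as in Definition~\ref{def:toeplitz}. Since $h>0$, both tuples are strictly increasing: $x_0<x_1<\cdots<x_{r-1}$ and $y_0<\cdots<y_{r-1}$. After reindexing from $1$ to $r$, they form admissible ordered tuples in~\eqref{eq:PFk}. Next, compute $x_i - y_j = u_0 + (i-j)h$. This gives $[K(x_i-y_j)]_{i,j} = [K(u_0+(i-j)h)]_{i,j}$ entrywise, so the determinant in~\eqref{eq:PFk} equals $D_r(u_0,h)$. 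The shift by $1$ in the indices applies the same permutation to rows and columns, and so it does not change the determinant. Hence $D_r(u_0,h)<0$ exhibits an order-$r$ minor with negative sign, and $K$ fails the PF$_r$ requirement at order $r\le r$. Since PF$_k$ for $k\ge r$ includes the order-$r$ condition, the same witness also shows $K\notin\mathrm{PF}_k$ for all $k\ge r$. I would note this in passing, because it is how Theorem~\ref{thm:main} yields ``not PF$_5$''.

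For the final sentence, I would observe that PF$_r$ quantifies over all pairs of increasing tuples, which form a $2r$-parameter family. Up to the common translation $(x,y)\mapsto(x+c,y+c)$, which leaves $x_i-y_j$ unchanged, this is still a $(2r-1)$-parameter family. The Toeplitz configurations form only a two-parameter slice, with equally spaced tuples of common spacing $h$. Positivity on this slice therefore says nothing about unequally spaced configurations, so it does not imply PF$_r$. This needs no counterexample; it only records that the implication runs one way. There is no genuine obstacle. The only point requiring care is the sign and ordering check: $h>0$ is needed for increasing tuples, and the row and column reindexing must preserve the determinant.
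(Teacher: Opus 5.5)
Your proof of the main implication is correct, and it is what the paper relies on. The paper gives no separate argument beyond the observation in Definition~\ref{def:toeplitz}: for $h>0$ the points $x_i=u_0/2+ih$ and $y_j=-u_0/2+jh$ are increasing and satisfy $x_i-y_j=u_0+(i-j)h$, so $D_r(u_0,h)$ is literally one of the minors in \eqref{eq:PFk}.

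Your treatment of the last sentence does not work. You conflate two claims. The first is ``the proposition only proves one direction,'' which is true and needs no argument. The second is ``Toeplitz positivity does not imply $\mathrm{PF}_r$,'' which is a mathematical non-implication and does need a counterexample. Counting parameters cannot show that positivity on a lower-dimensional slice ``says nothing'' about the other configurations.

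In fact the second claim is false for $r=2$ and this $K$. A general $2\times 2$ minor has the form $K(b)K(c)-K(a)K(d)$ with $a<b<d$, $a<c<d$ and $a+d=b+c$. On the other hand, $D_2(u_0,h)=K(u_0)^2-K(u_0-h)K(u_0+h)\ge 0$ for all $u_0\in\mathbb{R}$ and $h>0$ is exactly midpoint log-concavity. For the positive continuous function $K$, midpoint log-concavity gives log-concavity, which gives $\mathrm{PF}_2$.

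The same phenomenon persists for larger $r$. Take a configuration with $x_i\in\alpha+h\mathbb{Z}$ and $y_j\in\beta+h\mathbb{Z}$. It is a minor of the Toeplitz matrix $[K(\alpha-\beta+(m-n)h)]_{m,n}$, and the contiguous minors of that matrix are exactly the numbers $D_s(\alpha-\beta+kh,h)$. Fekete's contiguous-minor criterion, plus density and continuity of $K$, then shows that strict positivity of $D_s$ for all $s\le r$ and all $(u_0,h)$ already forces $\mathrm{PF}_r$.

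So the final sentence is defensible only in a weak, methodological sense: the reduction supplies no converse, and, as in Remark~\ref{rem:global}, isolated positive determinants certify nothing. You should state it that way and drop the dimension-count argument.
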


\section{Asymptotic Expansion and the First Admissible Coefficient}
\label{sec:asymptotic}

\subsection{Taylor expansion of the Toeplitz entries}

Write
\[
  K(u_0 + nh) \;=\;
  \sum_{k=0}^{\infty} a_k(u_0)\,(nh)^k,
  \qquad
  a_k(u_0) = \frac{K^{(k)}(u_0)}{k!},
\]
valid for $u_0 > 0$.  Substituting into the Toeplitz
determinant and expanding:

\begin{align}
  D_r(u_0,h) &\;=\;
  \sum_{\sigma \in S_r} \sgn(\sigma)
  \prod_{i=0}^{r-1}
  \biggl[\sum_{k \ge 0} a_k\,(i-\sigma(i))^k\,h^k\biggr]
  \nonumber\\
  &\;=\;
  \sum_{k_0,\ldots,k_{r-1} \ge 0}
  h^{k_0+\cdots+k_{r-1}}
  \Bigl(\prod_{i=0}^{r-1} a_{k_i}\Bigr)\,
  W(k_0,\ldots,k_{r-1}),
  \label{eq:expansion}
\end{align}
where the \emph{generalized Vandermonde factor} is
\begin{equation}\label{eq:W}
  W(k_0,\ldots,k_{r-1}) \;=\;
  \sum_{\sigma \in S_r} \sgn(\sigma)
  \prod_{i=0}^{r-1} (i-\sigma(i))^{k_i}.
\end{equation}

\subsection{The first admissible power and its coefficient}

\begin{lemma}[Vandermonde divisibility]\label{lem:vandermonde}
Let $K$ be real-analytic on an open interval containing
$\{u_0 + (i-j)h : 0 \le i,j \le r-1\}$, and set
$x_i = u_0/2 + ih$, $y_j = -u_0/2 + jh$, so that
$[M]_{ij} = K(x_i - y_j)$.  Then $D_r(u_0,h)$
vanishes to order at least $r(r-1)$ at $h = 0$
as an analytic function of $h$ near $0$.
\end{lemma}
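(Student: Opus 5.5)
The plan is to use the standard divided-difference argument for kernels $K(x_i-y_j)$ with clustered nodes. Write $x_i = u_0/2 + ih$ and $y_j = -u_0/2 + jh$. For fixed $u_0$, the determinant $D_r(u_0,h)=\det[K(x_i-y_j)]$ is analytic in $h$ near $0$, because each entry $K(u_0+(i-j)h)$ is a composition of $K$ with an affine function of $h$. It therefore suffices to show that its Taylor coefficients of order less than $r(r-1)$ vanish.

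First consider the rows. Applying successive row differences turns row $i$ into the $i$-th forward divided difference in $x$:
\[
  \Delta^i_x K(\cdot - y_j)\big|_{x_0}.
\]
This equals $h^i \, i!\, [x_0,\ldots,x_i]K(\cdot-y_j)$. These are unimodular row operations, so the determinant is unchanged, and we get
\[
  D_r = \Bigl(\prod_{i=0}^{r-1} i!\,h^i\Bigr)\det\bigl[g_i(y_j)\bigr],
\]
where $g_i(y)=[x_0,\ldots,x_i]K(\cdot-y)$. By the Hermite--Genocchi formula,
\[
  g_i(y)=\int_{\Delta_i}K^{(i)}\bigl(x_0+h\textstyle\sum_\ell t_\ell \ell - y\bigr)\,dt.
\]
So $g_i$ is an analytic function of $(h,y)$ near $(0,-u_0/2)$; no division by $h$ survives.

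Next apply the same procedure to the columns of $[g_i(y_j)]$, with column differences in $y$. This extracts another factor $\prod_j j!\,h^j$ and leaves
\[
  \det\bigl[[y_0,\ldots,y_j]\,g_i\bigr],
\]
which is again analytic in $h$ by Hermite--Genocchi applied in the $y$-variable. The sign $(-1)^j$ coming from differentiating $K(x-y)$ in $y$ is harmless. Altogether,
\[
  D_r(u_0,h)=h^{2\binom{r}{2}}\,\Bigl(\prod_{i=0}^{r-1} i!\Bigr)^2\,E(h),
\]
with $E$ analytic near $h=0$. Since $2\binom{r}{2}=r(r-1)$, this gives vanishing to order at least $r(r-1)$. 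As a byproduct, $E(0)=\det[(-1)^jK^{(i+j)}(u_0)/(i!\,j!)]$, which should be consistent with the $C_r$ formula stated in the abstract.

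The main point requiring care is checking analyticity of the divided differences uniformly as $h\to0$, so that the factored $h$-powers are genuinely extracted and not cancelled by poles. Hermite--Genocchi handles this directly, provided $K$ is analytic on a complex neighbourhood of the segment containing all arguments $u_0+(i-j)h$. The hypothesis gives exactly this for small $h$, assuming $u_0\neq0$ or simply taking the interval as stated.

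As an alternative check, one could argue via the expansion \eqref{eq:expansion}. The factor $W(k_0,\ldots,k_{r-1})$ is the coefficient extraction of $\det[(i-j)^{k}]$-type sums. For total degree $\sum k_i<r(r-1)$, this should vanish because $\prod_i (x_i-y_{\sigma(i)})^{k_i}$, expanded binomially, is antisymmetrized in both the $x$'s and the $y$'s. A nonzero result needs degrees at least $0+1+\cdots+(r-1)$ in each set of variables. This is essentially the same counting argument, but the divided-difference route avoids the combinatorics, so I would use it.
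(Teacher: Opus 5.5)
Your proof is correct, and it takes a genuinely different route from the paper's.

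\textbf{The paper's route.} The paper argues structurally. It observes that $\det[K(x_i-y_j)]$ is alternating separately in $\mathbf{x}$ and in $\mathbf{y}$. Hence it is divisible, as an analytic function of all $2r$ variables, by $\Delta(\mathbf{x})\Delta(\mathbf{y})$. On the line $x_i=u_0/2+ih$, $y_j=-u_0/2+jh$ this product is a nonzero constant times $h^{r(r-1)}$. This gives divisibility in all $2r$ variables at once. However, it needs division of an alternating analytic function by the Vandermonde exactly at the collision point $h=0$, which the paper treats somewhat informally. It also says nothing about the quotient, so the paper must then identify $C_r$ by brute-force Taylor expansion over compositions of $r(r-1)$.

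\textbf{What your route buys.} Your unipotent row/column differencing plus Hermite--Genocchi avoids the division step. It needs only $K\in C^{2r-2}$ to get $D_r=O(h^{r(r-1)})$. Most usefully, it makes the quotient explicit, so that
\[
  C_r(u_0)=\Bigl(\prod_{i=0}^{r-1}i!\Bigr)^{2}E(0)
  =\det\bigl[(-1)^jK^{(i+j)}(u_0)\bigr]_{i,j=0}^{r-1}
  =(-1)^{\binom{r}{2}}\det\bigl[K^{(i+j)}(u_0)\bigr]_{i,j=0}^{r-1}.
\]
This is the Wronskian-type determinant whose nonnegativity is necessary for PF$_r$, since it is the limit of $D_r/h^{r(r-1)}\ge 0$. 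For $r=2$ it is the log-concavity quantity $K'^2-KK''=a_1^2-2a_0a_2$, exactly the paper's sum.

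\textbf{Consistency with the paper's numbers.} You need not hedge here. With the Appendix values at $u_0=0.01$, these Hankel determinants give $C_2\approx14.858$, $C_3\approx5.938\times10^{4}$, $C_4\approx2.213\times10^{10}$ and $C_5\approx-3.532\times10^{18}$, matching the paper.

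\textbf{A consequence worth recording.} $C_5$ depends only on $K^{(0)}(u_0),\dots,K^{(8)}(u_0)$. The terms of Proposition~\ref{prop:leading} containing $a_9,\dots,a_{14}$ cancel in aggregate, even though individual $W$'s are nonzero. Already for $r=3$, the six orderings $(0,1,5),(0,5,1),(1,0,5),(1,5,0),(5,0,1),(5,1,0)$ give $W=-30,0,30,0,30,-30$, which sum to zero. So the order-$14$ requirement drawn in Remark~\ref{rem:order14}, and the $7837$-term conditioning problem, are artifacts of the expanded form rather than intrinsic to $C_5$.
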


\begin{proof}
The function $D_r(u_0,h)$ is analytic in $h$ near $0$.
We use the classical fact (see \cite{Karlin1968}, Ch.~0, \S2)
that $\det[f(x_i - y_j)]$ with analytic $f$ is separately
alternating as an analytic function in
$(x_0,\ldots,x_{r-1})$ and in $(y_0,\ldots,y_{r-1})$:
swapping $x_i \leftrightarrow x_{i'}$ permutes two rows and
negates the determinant.
An analytic function that is alternating in $(x_0,\ldots,x_{r-1})$
vanishes whenever $x_i = x_{i'}$ for any $i \ne i'$.
Thus, for fixed $\mathbf{y}$, the function lies in the ideal
generated by $\{x_{i'} - x_i\}_{i < i'}$; iterating over all
$\binom{r}{2}$ pairs yields divisibility by $\Delta(\mathbf{x})$.
Equivalently, in a neighbourhood of points with distinct $x_i$,
any alternating analytic function can be written as
$\Delta(\mathbf{x}) \cdot S(\mathbf{x})$ with $S$ analytic and
symmetric.
The same applies to $\Delta(\mathbf{y})$ by alternating in the column variables.
In our configuration $x_{i'} - x_i = (i'-i)h$ and
$y_{j'} - y_j = (j'-j)h$ are both \emph{linear} in $h$, so
\[
  \Delta(\mathbf{x})\,\Delta(\mathbf{y})
  \;=\;
  \prod_{i < i'}(i'-i)h \;\cdot\; \prod_{j < j'}(j'-j)h
  \;=\;
  h^{r(r-1)}\cdot\bigl(\Delta(0,1,\ldots,r-1)\bigr)^2,
\]
where $\Delta(0,\ldots,r-1)=\prod_{i<i'}(i'-i)$ is a nonzero integer.
Since $\Delta(\mathbf{x})\Delta(\mathbf{y})$ divides $D_r$
as an analytic function of $h$, and since this product equals
$h^{r(r-1)}$ times a nonzero constant, the order of vanishing
of $D_r$ at $h=0$ is at least $r(r-1)$.
\end{proof}

\begin{proposition}\label{prop:leading}\label{prop:Cr}
Writing $q=r(r-1)$, one has
\[
  D_r(u_0,h)=h^q C_r(u_0)+O(h^{q+1}),
\]
where
$C_r(u_0) = \lim_{h\to 0} D_r(u_0,h)/h^{r(r-1)}$,
\[
  C_r(u_0) \;=\;
  \sum_{\substack{k_0,\ldots,k_{r-1}\ge 0 \\
  k_0+\cdots+k_{r-1}=r(r-1)}}
  \Bigl(\prod_{i=0}^{r-1} a_{k_i}\Bigr)\,
  W(k_0,\ldots,k_{r-1}).
\]
If $C_r(u_0)\ne 0$, then $h^{r(r-1)}$ is the leading power;
if $C_r(u_0)=0$, the order of vanishing is higher.
\end{proposition}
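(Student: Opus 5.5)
The proof combines the formal expansion \eqref{eq:expansion} with Lemma~\ref{lem:vandermonde}. Fix $u_0>0$ and an $h_0>0$ small enough that $K$ is real-analytic on the closed interval $[u_0-(r-1)h_0,\,u_0+(r-1)h_0]$. This is possible because $K=\Phi$ on $(0,\infty)$, and $\Phi$ is real-analytic there: the series \eqref{eq:Phi} converges locally uniformly on a complex neighbourhood of the positive axis. Shrinking $h_0$ if needed, the Taylor series $K(u_0+nh)=\sum_k a_k(u_0)(nh)^k$ converges absolutely and uniformly for $|h|\le h_0$ and $|n|\le r-1$.

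\emph{Step 1: $D_r$ is analytic in $h$ with the stated coefficients.} Each entry of $M$ is a convergent power series in $h$. The determinant is a finite signed sum of products of $r$ entries. The Cauchy product of absolutely convergent series is absolutely convergent, so the terms may be rearranged freely. This justifies \eqref{eq:expansion} as an identity of convergent power series in $h$ for $|h|\le h_0$. Grouping the terms by total degree $m=k_0+\cdots+k_{r-1}$, the coefficient of $h^m$ is
\[
c_m(u_0)=\sum_{k_0+\cdots+k_{r-1}=m}\Bigl(\prod_i a_{k_i}\Bigr)W(k_0,\ldots,k_{r-1}).
\]
This is a finite sum, since there are finitely many compositions of $m$ into $r$ nonnegative parts.

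\emph{Step 2: the low-order coefficients vanish.} By Lemma~\ref{lem:vandermonde}, $D_r(u_0,\cdot)$ vanishes to order at least $q=r(r-1)$ at $h=0$. The power series of an analytic function is unique, so $c_m(u_0)=0$ for all $m<q$. As a sanity check independent of the lemma, $W(k_0,\ldots,k_{r-1})=\det[(i-j)^{k_i}]$. For $m<q$ one would like to see this vanish directly, but individual $W$'s need not vanish; only the weighted sum does. I will therefore rely on the lemma rather than attempt a term-by-term argument.

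\emph{Step 3: extract the leading term.} We now have $D_r(u_0,h)=\sum_{m\ge q}c_m h^m = h^q\bigl(c_q+h\,g(h)\bigr)$ with $g$ analytic, hence bounded near $0$. This gives $D_r=h^qC_r+O(h^{q+1})$ with $C_r=c_q$. It also shows that $\lim_{h\to0}D_r/h^q$ exists and equals $c_q$, which matches the displayed formula. Finally, note that $W(k_0,\ldots,k_{r-1})=\det[(i-j)^{k_i}]_{i,j}$, since the permutation expansion of this determinant is exactly \eqref{eq:W}. This yields the form stated in the abstract.

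The dichotomy is then immediate. If $c_q\neq0$, the order of vanishing is exactly $q$. If $c_q=0$, then $D_r=h^{q+1}g(h)$, so the order of vanishing is higher.

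The main obstacle is Step 1, the rigorous justification of the rearrangement. It needs a uniform radius of convergence of the Taylor series of $\Phi$ about $u_0$ exceeding $(r-1)h_0$. This follows from analyticity of $\Phi$ on a complex strip around $(0,\infty)$, which I will verify by bounding $|e^{-\pi n^2e^{4u}}|$ for $|\operatorname{Im}u|<\pi/8$, where $\operatorname{Re}e^{4u}>0$. The remaining steps are formal.
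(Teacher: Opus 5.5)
Your proposal is correct and follows essentially the paper's route: the Vandermonde divisibility lemma forces the coefficients of $h^m$ to vanish for $m<r(r-1)$, and the Leibniz--Taylor expansion identifies the coefficient of $h^{r(r-1)}$ as the stated sum with $W(k_0,\ldots,k_{r-1})=\det[(i-j)^{k_i}]$. The paper leaves the convergence question implicit; you add the absolute-convergence justification for rearranging the expansion, and the appeal to uniqueness of power-series coefficients.
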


\begin{proof}
By Lemma~\ref{lem:vandermonde}, $D_r(u_0,h) = h^{r(r-1)} \cdot C_r(u_0) + O(h^{r(r-1)+1})$.
To identify $C_r(u_0)$ explicitly, Taylor-expand around $u_0$:
\[
  K(u_0 + (i-j)h) \;=\; \sum_{k\ge 0} a_k(u_0)\,(i-j)^k\,h^k,
  \qquad a_k(u_0) = \frac{K^{(k)}(u_0)}{k!}.
\]
Expanding by the Leibniz formula as in~\eqref{eq:expansion} and
collecting terms of total degree $r(r-1)$ gives precisely the
formula for $C_r(u_0)$ stated above.
\end{proof}

\begin{remark}[Range of Taylor orders entering $C_5$]\label{rem:order14}
The determinant form
\[
  W(k_0,\ldots,k_4)=\det[(i-j)^{k_i}]_{i,j=0}^{4}
\]
is the evaluation matrix of the five polynomials
$p_i(x)=(i-x)^{k_i}$ at $x=0,\ldots,4$.  If $W\ne0$, these
polynomials are linearly independent.  Let
$d_0\le\cdots\le d_4$ be the exponents $k_i$ in increasing order.
Dimension counting in the space of polynomials of degree at most
$m-1$ then forces $d_m\ge m$ for $m=0,\ldots,4$.  Since
$d_0+\cdots+d_4=20$, the largest exponent is at most
$20-(0+1+2+3)=14$.  This bound is attained: for example,
$W(0,1,2,3,14)=2992822560\ne0$.  Repeated exponents can also give
$W\ne0$; distinctness is not required.  Thus a rigorous evaluation
of $C_5(u_0)$ must control the Taylor coefficients $a_k(u_0)$
\emph{through order 14} with genuine error bounds.  This observation
is central to Section~\ref{sec:withdrawn}.
\end{remark}

\section{Computational Verification}\label{sec:computation}

\subsection{Rigorous interval arithmetic certification}

We give a formal certification of $\det(M_5) < 0$ separating
truncation error from floating-point rounding.
All interval arithmetic is performed using the \texttt{mpmath.iv}
module~\cite{mpmath}, with outward-rounded arbitrary-precision
operations tracking rigorous enclosures.
We note that the same computation can also be performed in the
Arb library~\cite{Johansson2017} (which uses ball arithmetic
over arbitrary-precision floating-point), providing an
independent route to the same certification.

\begin{lemma}[Truncation bound]\label{lem:truncation}
For all $u \in [0, 0.21]$ and $N = 50$, the truncated approximation
$\Phi_N(u) = \sum_{n=1}^{N}(2\pi^2 n^4 e^{9u} - 3\pi n^2 e^{5u})
e^{-\pi n^2 e^{4u}}$ satisfies
\[
  |\Phi(u) - \Phi_N(u)| \;\le\;
  \sum_{n=51}^{\infty}
  \bigl(2\pi^2 n^4 e^{9u} + 3\pi n^2 e^{5u}\bigr)\,e^{-\pi n^2}
  \;<\; 10^{-70}.
\]
\end{lemma}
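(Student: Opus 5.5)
The plan is to bound each tail term $n\ge 51$ termwise and then sum. The first inequality is immediate: $\Phi-\Phi_N=\sum_{n\ge51}(2\pi^2n^4e^{9u}-3\pi n^2e^{5u})e^{-\pi n^2e^{4u}}$. Apply the triangle inequality and use $e^{4u}\ge1$ for $u\ge0$, so that $e^{-\pi n^2e^{4u}}\le e^{-\pi n^2}$. This gives the displayed majorant. It is valid for every $u\ge0$, and the restriction $u\le0.21$ only enters through the prefactors.

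For the numerical bound I would use $u\le 0.21$ to get $e^{9u}\le e^{1.89}<6.7$ and $e^{5u}\le e^{1.05}<2.9$. Each summand is then at most $c\,n^4e^{-\pi n^2}$ with $c=2\pi^2\cdot 6.7+3\pi\cdot 2.9<160$, using $n^2\le n^4$. Next I compare the tail with a geometric series. Put $t_n=n^4e^{-\pi n^2}$. Then
\[
\frac{t_{n+1}}{t_n}=\Bigl(1+\tfrac1n\Bigr)^4e^{-\pi(2n+1)}\le 2\,e^{-103\pi}\ll\tfrac12
\]
for $n\ge51$, so $\sum_{n\ge51}t_n\le 2t_{51}$. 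Finally I estimate $t_{51}=51^4e^{-2601\pi}$. Since $2601\pi>8171$, we get $e^{-2601\pi}<10^{-3548}$, and hence the whole tail is below $320\cdot 51^4\cdot 10^{-3548}<10^{-3539}$. This is far smaller than $10^{-70}$.

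I do not expect any real obstacle. The only care needed is to keep the constants rigorous: use upper bounds for $e^{1.89}$ and $e^{1.05}$, and a lower bound for $2601\pi$ (for example $\pi>3.14$ gives $2601\pi>8167$). These could also be checked once in interval arithmetic, but the hand bounds above already carry an enormous margin. The slack also shows that $N=50$ is very conservative. I would remark that the same argument bounds derivative tails as well, since differentiating in $u$ only introduces polynomial factors in $n^2e^{4u}$, which the Gaussian decay absorbs in the same way.
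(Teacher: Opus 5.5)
Your proposal is correct and takes essentially the same route as the paper. Both arguments dominate $e^{-\pi n^2 e^{4u}}$ by $e^{-\pi n^2}$, bound $e^{9u},e^{5u}$ on $[0,0.21]$, compare the tail with a geometric series and estimate the $n=51$ term; your ratio test is a slightly cleaner substitute for the paper's $\pi(n^2-51^2)\ge 102(n-51)$. Your value $2601\pi\approx 8171.3$ is the accurate one (the paper's ``$\pi\cdot 51^2>8188$'' is a harmless slip), and $320\cdot 51^4\cdot 10^{-3548}\approx 2.2\times 10^{-3539}$, so write $<10^{-3538}$ there---neither point affects the bound $10^{-70}$.
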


\begin{proof}
For $n\ge 1$ and $u\in[0,0.21]$ we have $e^{4u}\ge 1$, hence
$e^{-\pi n^2 e^{4u}} \le e^{-\pi n^2}$.
Moreover $e^{9u}\le e^{1.89}$ and $e^{5u}\le e^{1.05}$ on $[0,0.21]$.
Therefore
\[
  |\Phi(u)-\Phi_N(u)|
  \;\le\;
  \sum_{n=N+1}^{\infty}
  \Bigl(2\pi^2 e^{1.89}\,n^4 + 3\pi e^{1.05}\,n^2\Bigr)e^{-\pi n^2}.
\]
For $n\ge 51$ we use $n^4\le 51^4\,e^{2\log(n/51)}$ and
$n^2\le 51^2\,e^{\log(n/51)}$, while
$e^{-\pi n^2}=e^{-\pi\cdot 51^2}\,e^{-\pi(n^2-51^2)}$.
Since $\pi(n^2-51^2)\ge 102\,(n-51)$ for $n\ge 51$,
the tail is bounded by a convergent geometric series:
\[
  \sum_{n=51}^\infty P(n)\,e^{-\pi n^2}
  \;\le\;
  e^{-\pi\cdot 51^2}\sum_{m=0}^\infty P(51+m)\,e^{-102m},
\]
with $P(x)=2\pi^2 e^{1.89}x^4+3\pi e^{1.05}x^2$.
The first term already satisfies
$P(51)\,e^{-\pi\cdot 51^2}<10^{-70}$
(since $\pi\cdot 51^2>8188$ while $\log_{10}P(51)<13$),
and the remaining geometric tail is smaller than the first term,
since $e^{-102}<10^{-44}$.
\end{proof}

\begin{lemma}[Entry enclosures]\label{lem:entries}
The $5\times 5$ Toeplitz matrix $M_5$ with $(u_0,h)=(0.01,0.05)$
has entries $[M_5]_{ij} = K(0.01 + (i-j)\cdot 0.05)$ for
$i,j\in\{0,1,2,3,4\}$.  The distinct values correspond to
$n = i-j \in \{-4,\ldots,4\}$.  Since $u_0 = 0.01 \ne 0$,
the kernel $K(0.01 + n\cdot 0.05) = \Phi(|0.01 + n\cdot 0.05|)$
takes nine genuinely distinct values; in particular
$K(0.01+n\cdot 0.05) \ne K(0.01-n\cdot 0.05)$ for $n\ne 0$.
Setting $c_n = K(0.01 + n\cdot 0.05)$, the rigorous enclosures are:
\begin{align*}
c_{-4} &= K(0.190) \;\in\; [0.102521821933,\; 0.102521821935], \\
c_{-3} &= K(0.140) \;\in\; [0.206851926534,\; 0.206851926536], \\
c_{-2} &= K(0.090) \;\in\; [0.327771243662,\; 0.327771243664], \\
c_{-1} &= K(0.040) \;\in\; [0.420611947568,\; 0.420611947570], \\
c_{\phantom{-}0} &= K(0.010) \;\in\; [0.445026555344,\; 0.445026555346], \\
c_{\phantom{-}1} &= K(0.060) \;\in\; [0.389871460587,\; 0.389871460589], \\
c_{\phantom{-}2} &= K(0.110) \;\in\; [0.280043280270,\; 0.280043280272], \\
c_{\phantom{-}3} &= K(0.160) \;\in\; [0.161153268151,\; 0.161153268153], \\
c_{\phantom{-}4} &= K(0.210) \;\in\; [0.071697042238,\; 0.071697042240].
\end{align*}
These enclosures are obtained by computing $\Phi_N(|c|)$ with $N = 50$
using \texttt{mpmath.iv} at 80-digit precision, then widening each endpoint
by $\pm 10^{-70}$ to absorb the truncation error of Lemma~\ref{lem:truncation}.
The \texttt{mpmath.iv} module tracks rounding errors with outward-rounded
interval operations, so the stated intervals are rigorous.
\end{lemma}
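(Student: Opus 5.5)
The plan is to reduce each stated enclosure to two separately controlled error sources: the truncation of the series defining $\Phi$, handled by Lemma~\ref{lem:truncation}, and floating-point rounding in evaluating the finite sum $\Phi_N$, handled by outward-rounded interval arithmetic.

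\emph{Arguments and symmetry.} First I compute the nine arguments $0.01+n\cdot 0.05$ for $n=-4,\ldots,4$. They are $-0.19,-0.14,-0.09,-0.04,0.01,0.06,0.11,0.16,0.21$, with absolute values $0.19,0.14,0.09,0.04,0.01,0.06,0.11,0.16,0.21$. Since $K(u)=\Phi(|u|)$ by~\eqref{eq:K}, each $c_n$ equals $\Phi$ at one of these nine points. All of them are pairwise distinct and lie in $[0,0.21]$, so Lemma~\ref{lem:truncation} applies to every one.

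\emph{Distinctness of the values.} The claim that the nine values are genuinely distinct, and in particular that $K(0.01+0.05n)\ne K(0.01-0.05n)$, will follow once the enclosures are in hand. The listed intervals are pairwise disjoint, so no further argument is needed. One could also note that $\Phi$ is strictly decreasing on $[0,\infty)$, but disjointness of the certified intervals is cleaner and fully rigorous.

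\emph{Computing the enclosures.} For each point $v$, I form a point interval containing $v$. Because $0.19$ and similar decimals are not exactly representable in binary, the interval must be constructed from the exact rational, e.g.\ \texttt{iv.mpf('0.19')}, so that it contains the true decimal. I then evaluate $\Phi_{50}(v)$ term by term in \texttt{mpmath.iv} at 80 digits. Each term $(2\pi^2 n^4 e^{9v}-3\pi n^2 e^{5v})e^{-\pi n^2 e^{4v}}$ uses interval enclosures of $\pi$ and $\exp$, and outward rounding guarantees that the resulting interval $I_v$ contains the exact value $\Phi_{50}(v)$. By Lemma~\ref{lem:truncation}, $\Phi(v)\in I_v+[-10^{-70},10^{-70}]$. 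Finally I round this interval outward to 12 decimal places and check that it sits inside the stated interval, which has width $2\cdot10^{-12}$. At 80-digit working precision the computed width of $I_v$ is far below $10^{-60}$, so the containment check has an enormous margin.

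\emph{Main obstacle.} The mathematics is routine; the delicate step is trusting the tooling. I would have to confirm two things: that \texttt{mpmath.iv}'s \texttt{exp} and constant $\pi$ are genuinely outward-rounded, and that the decimal inputs are enclosed correctly rather than silently converted to binary points. Given the reported regression in a later \texttt{mpmath} release, I would pin version 1.3.0 and cross-check against Arb ball arithmetic. The subtraction in $2\pi^2n^4e^{9v}-3\pi n^2e^{5v}$ is harmless at this precision, since the terms are of moderate size and decay quickly.
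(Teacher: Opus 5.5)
Your proposal is correct and follows the paper's route: interval evaluation of $\Phi_{50}$ at 80 digits in \texttt{mpmath.iv}, widening by the $10^{-70}$ truncation bound of Lemma~\ref{lem:truncation}, then checking containment in the stated intervals. The one place you depart from the paper is distinctness. The paper infers it from $u_0\ne 0$, which only makes the arguments $|0.01+0.05n|$ distinct and tacitly needs injectivity of $\Phi$ on $[0,0.21]$; you infer it from the pairwise disjointness of the certified enclosures, which is a small but genuine tightening.
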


\begin{proposition}[Certified negative determinant]\label{prop:cert}
Let $M_5$ be the $5\times 5$ Toeplitz matrix with entries
$[M_5]_{ij} = K(u_0 + (i-j)h)$ for $(u_0,h) = (0.01, 0.05)$.
Then
\[
  \det(M_5) \;\in\;
  [-1.8472496 \times 10^{-9},\; -1.8472225 \times 10^{-9}],
\]
and in particular $\det(M_5) < 0$.
\end{proposition}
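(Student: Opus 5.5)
We certify the sign by carrying the interval enclosures of Lemma~\ref{lem:entries} through an exact, outward-rounded evaluation of the determinant. We start from the nine intervals $\mathbf{c}_n\ni c_n$, $n=-4,\dots,4$. By Lemma~\ref{lem:truncation} and the construction in Lemma~\ref{lem:entries}, each of these rigorously contains the true value $K(0.01+0.05n)$. We then form the interval matrix $\mathbf{M}$ with $[\mathbf{M}]_{ij}=\mathbf{c}_{i-j}$. Every real matrix whose entries lie in these intervals, in particular $M_5$ itself, is a member of $\mathbf{M}$.

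By inclusion isotonicity, any evaluation of a polynomial in the entries using outward-rounded interval operations yields an interval containing the true value. We therefore evaluate $\det$ by a fixed polynomial formula applied to $\mathbf{M}$ inside \texttt{mpmath.iv} at 80 digits. Two formulas are suitable: the Leibniz expansion over $S_5$ (120 terms), or cofactor expansion along the first row.

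We deliberately avoid pivoted Gaussian elimination on intervals. Division by interval pivots can widen the enclosure badly, and the version warning for \texttt{mpmath} 1.4.1 suggests that library determinant routines should not be trusted blindly. Since the matrix is Toeplitz, the cofactor recursion reuses $4\times4$ and $3\times3$ minors, so the cost is trivial in either case.

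The output interval $\mathbf{D}$ then satisfies $\det(M_5)\in\mathbf{D}$. The claim follows once we check that the computed enclosure lies inside $[-1.8472496\times10^{-9},-1.8472225\times10^{-9}]$, in particular that its upper endpoint is negative.

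The only genuine obstacle is \emph{cancellation}. The entries are of size about $0.1$ to $0.45$, while the determinant is about $10^{-9}$. Each of the 120 Leibniz products therefore has magnitude up to roughly $10^{-3}$, and they cancel to about six orders below that. The widths of the entry intervals, about $10^{-12}$, propagate through this cancellation.

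A first-order estimate bounds the propagated width by $\sum_{i,j}|\operatorname{cof}_{ij}(M_5)|\cdot\operatorname{width}(\mathbf{c}_{i-j})$, with cofactors of size at most $10^{-3}$. That gives a width of order $10^{-14}$, far smaller than the distance $1.8\times10^{-9}$ from zero. So the sign is stable; the stated width of about $3\times10^{-14}$ is consistent with this.

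If the direct expansion turned out wider than expected, the fallback would be two alternatives. One is to use the 80-digit point values (width about $10^{-70}$) rather than the 12-digit rounded enclosures printed in the lemma. The other is a centred form: compute $\det$ at the midpoint matrix in interval arithmetic, then add the cofactor-weighted radius bound above. Either way, the verification reduces to a finite, reproducible interval computation, whose outcome is the stated enclosure.
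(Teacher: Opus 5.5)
Your proposal is correct and follows the paper's own argument: an outward-rounded \texttt{mpmath.iv} evaluation of the 120-term Leibniz expansion on the entry enclosures of Lemma~\ref{lem:entries}, followed by a check of containment in the stated interval. The paper uses \texttt{iv.det} only as a secondary cross-check, so avoiding it costs you nothing.

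One correction to your width heuristic, which otherwise mislabels the fallback as optional. The cofactor-weighted sum describes, to first order, the true range of $\det$ over the box, which is what a centred form achieves. A naive interval Leibniz evaluation gets no credit for cancellation: since all entries are positive, its width is $\approx 2r\sum_{i,j}\mathrm{perm}(M^{(ij)})\approx 10^{-11}$ on the printed 12-digit intervals (here $r=10^{-12}$ and $M^{(ij)}$ are the $4\times4$ minors). That certifies the sign but cannot fit inside an interval of width $2.7\times10^{-14}$. The stated enclosure therefore genuinely needs one of your fallbacks, most simply the 80-digit entry intervals that Lemma~\ref{lem:entries} actually constructs.
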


\begin{proof}
Expanding $\det(M_5)$ via the Leibniz formula as a sum of
$5! = 120$ signed products of entries, each product involves
5 entries drawn from the enclosures in Lemma~\ref{lem:entries}.
Interval arithmetic in \texttt{mpmath.iv} tracks the accumulated
rounding error through all $120 \times 5 = 600$ multiplications
and 119 additions.  The resulting enclosure
$[-1.8472496 \times 10^{-9},\; -1.8472225 \times 10^{-9}]$
has width $2.71 \times 10^{-14}$ and lies strictly below zero.
As an independent check, interval determinant evaluation in
\texttt{mpmath.iv} also yields a strictly negative enclosure.
The ancillary verifier performs and compares both computations.
\end{proof}

\begin{table}[ht]
\centering
\caption{Interval arithmetic certification at $(u_0,h) = (0.01, 0.05)$.}
\label{tab:certified}
\begin{tabular}{cll}
\toprule
$r$ & Rigorous enclosure of $D_r(u_0,h)$ & Status \\
\midrule
2 & $[3.4064040623 \times 10^{-2},\; 3.4064040623 \times 10^{-2}]$ & $>0$ \checkmark \\
3 & $[6.9769706426 \times 10^{-4},\; 6.9769706427 \times 10^{-4}]$ & $>0$ \checkmark \\
4 & $[3.8274713580 \times 10^{-6},\; 3.8274713771 \times 10^{-6}]$ & $>0$ \checkmark \\
5 & $[-1.8472496 \times 10^{-9},\; -1.8472225 \times 10^{-9}]$ & $<0$ \checkmark \\
\bottomrule
\end{tabular}
\end{table}

\subsection{Additional certified counterexamples}

Beyond the central configuration, \emph{eight} further configurations
are certified negative by the same interval chain
(script \texttt{verify\_pf5.py}; re-verified 20 July 2026 with
\texttt{mpmath==1.3.0}); they are listed in
Table~\ref{tab:counterexamples}.

\begin{table}[ht]
\centering
\caption{Certified counterexamples with $D_5(u_0,h) < 0$.
All nine rows (the central witness and eight additional configurations)
carry full interval certificates.}
\label{tab:counterexamples}
\begin{tabular}{ccrr}
\toprule
$u_0$ & $h$ & $D_5(u_0,h)$ & Certified? \\
\midrule
$0.001$ & $0.005$ & $-3.75 \times 10^{-28}$ & \checkmark \\
$0.001$ & $0.01$  & $-3.75 \times 10^{-22}$ & \checkmark \\
$0.001$ & $0.05$  & $-2.47 \times 10^{-9}$  & \checkmark \\
$0.01$  & $0.01$  & $-3.33 \times 10^{-22}$ & \checkmark \\
$0.01$  & $0.02$  & $-2.86 \times 10^{-16}$ & \checkmark \\
$0.01$  & $0.05$  & $-1.85 \times 10^{-9}$  & \checkmark (central) \\
$0.02$  & $0.02$  & $-1.84 \times 10^{-16}$ & \checkmark \\
$0.02$  & $0.03$  & $-4.06 \times 10^{-13}$ & \checkmark \\
$0.02$  & $0.05$  & $-7.02 \times 10^{-11}$ & \checkmark \\
\bottomrule
\end{tabular}
\end{table}

\section{Numerical Explorations and Withdrawn Claims}\label{sec:numerics}

Nothing in this section is certified, and nothing elsewhere in the
paper depends on it.  We first document precisely which claims of
version 1 were withdrawn and why; we then report the surviving
numerical observations with their actual (non-certificate) status.

\subsection{Withdrawn claims of version 1}\label{sec:withdrawn}

Version 1 asserted the following as a theorem: (i) $C_r(u_0) > 0$ for
$r = 2,3,4$ at all tested $u_0 \in (0,5]$; (ii) existence of a unique
critical point $u_0^* = 0.031139763615\ldots$ (``bisection-certified to
15 digits'') with $C_5 < 0$ on $(0, u_0^*)$ and $C_5 > 0$ beyond;
(iii) $C_6(0.01), C_7(0.01) > 0$; (iv) $\lim_{u_0 \to 0^+} C_5(u_0) =
-3.993 \times 10^{18}$.  The Gaussian-healing thresholds of
Section~\ref{sec:gaussian} were presented as consequences at the
$C_5$ level.

The 17 July 2026 audit of the archived code found the offered
certificate unsound on three independent grounds:

\begin{enumerate}
  \item \textbf{Order-14 requirement.}  By Remark~\ref{rem:order14},
        a rigorous evaluation of $C_5$ needs certified enclosures of
        the Taylor coefficients $a_k(u_0)$ through $k = 14$.  The v1
        pipeline computed derivative series with a hard stop once the
        exponential parameter of a term first exceeded $200$, and its
        printed error discussion bounded derivative tails only through
        $k \le 10$.
  \item \textbf{Unsound tail insertion.}  After the hard stop, the
        code inserted a flat tail bound of $10^{-72}/k!$ for every
        derivative order.  This bound is false: at $u_0 = 0.001$ the
        first \emph{omitted} contribution to $a_{14}$ (the $n = 8$
        series term) is already about $5.24 \times 10^{-54}$ ---
        eighteen orders of magnitude above the claimed budget.  The
        grid values of $C_5$ produced by that pipeline are therefore
        high-precision heuristics, not enclosures.
  \item \textbf{Grid $\ne$ interval.}  Negativity of $C_5$ at the 31
        grid points $u_0 \in \{0.001, \ldots, 0.031\}$, together with
        continuity and a single bracketed sign change, does not prove
        a constant sign on the whole interval $(0, u_0^*)$, nor
        uniqueness of the zero.  No interval-uniform lower bound on
        $|C_5|$ between grid points was ever established.
\end{enumerate}

Accordingly, all v1 statements about the global sign of $C_5$, the
uniqueness and precision of $u_0^*$, the limit $C_5(0^+)$, and every
Gaussian-healing statement derived from them, are withdrawn.  We
emphasize that Theorems~\ref{thm:main} and~\ref{thm:PForder} are
untouched: they rest exclusively on the direct interval certificates
of Section~\ref{sec:computation}, which involve no derivative
computations.

\subsection{Non-certified observations on the coefficients
\texorpdfstring{$C_r$}{Cr}}

The following observations are high-precision numerics (80--400
digits, \texttt{mpmath}); they are reported to motivate the open
problems, not as results.

\begin{numobs}[Sign pattern of $C_r$]\label{obs:signs}
At $u_0 \in \{0.001, 0.005, 0.01, 0.02\}$ the computed values give
$C_2, C_3, C_4 > 0$ and $C_5 < 0$; at
$u_0 \in \{0.05, 0.1, 0.5, 1.0\}$ all of $C_2,\ldots,C_5$ are
positive.  A sign change of $C_5$ is located numerically near
\[
  u_0 \;\approx\; 0.0311398,
\]
by bisection on the (non-certified) evaluations.  At $u_0 = 0.01$,
the computed higher coefficients are
$C_6 \approx +1.75 \times 10^{30}$ and
$C_7 \approx +1.97 \times 10^{44}$, both positive, suggesting that
the sign anomaly is specific to order 5.
\end{numobs}

\begin{numobs}[Consistency of two evaluation routes]\label{obs:consistency}
At $u_0 = 0.01$, evaluating the algebraic formula of
Proposition~\ref{prop:leading} by enumerating all $7837$
non-vanishing $5$-tuples gives
$C_5^{\mathrm{(alg)}} = -3.531804787140 \times 10^{18}$, agreeing to
12 significant digits with Richardson extrapolation of
$D_5(0.01,h)/h^{20}$ over $h \in \{10^{-5},\ldots,10^{-8}\}$
(Table~\ref{tab:Cr_conv}).  The individual terms reach
$\pm 3.3 \times 10^{20}$ and cancel to a value of order $10^{18}$;
this conditioning is precisely why a certified evaluation requires
the genuine order-14 tail bounds of Section~\ref{sec:withdrawn}.
\end{numobs}

\begin{numobs}[Alternating even derivatives]\label{obs:alternating}
The computed Taylor coefficients at $u_0 = 0.01$
(Appendix~\ref{app:data}) satisfy
$\sgn(a_{2\ell}) = (-1)^\ell$ for $\ell = 0,\ldots,4$, with rapid
growth $|a_0| \approx 0.45$, $|a_2| \approx 16.6$,
$|a_4| \approx 267$, $|a_6| \approx 2239$, $|a_8| \approx 6198$,
$a_{10} \approx +82935$, and $a_{12} \approx -1.356 \times 10^6$.
Thus the alternating pattern displayed through $a_8$ does not
continue at $a_{10}$.
Heuristically, the $r = 5$ sum must reach total degree 20 and thus
engages $a_{10}$--$a_{14}$; the alignment of their signs and
magnitudes with the $5$-point Vandermonde factors appears to drive
$C_5$ negative, while at $r = 4$ (degree 12) and $r = 6,7$ the
tested values are positive.  We regard the precise mechanism as open
(Problem~\ref{prob:symbolic} in Section~\ref{sec:open}).
\end{numobs}

\begin{table}[ht]
\centering
\caption{Convergence of $D_r(u_0,h)/h^{r(r-1)}$ at $u_0 = 0.01$
(non-certified numerics).}
\label{tab:Cr_conv}
\begin{tabular}{rllll}
\toprule
$h$ & $C_2$ & $C_3$ & $C_4$ & $C_5$ \\
\midrule
$10^{-5}$ & 14.858003 & 59\,379.316 & $2.21320\!\times\!10^{10}$ & $-3.53180\!\times\!10^{18}$ \\
$10^{-6}$ & 14.858003 & 59\,379.317 & $2.21320\!\times\!10^{10}$ & $-3.53180\!\times\!10^{18}$ \\
$10^{-7}$ & 14.858003 & 59\,379.317 & $2.21320\!\times\!10^{10}$ & $-3.53180\!\times\!10^{18}$ \\
$10^{-8}$ & 14.858003 & 59\,379.317 & $2.21320\!\times\!10^{10}$ & $-3.53180\!\times\!10^{18}$ \\
\bottomrule
\end{tabular}
\end{table}

\subsection{Gaussian deformation (exploratory)}\label{sec:gaussian}

In the de~Bruijn--Newman framework, the Gaussian multiplier
$e^{tu^2}$ plays the role of heat flow: applying it to
$\Phi$ for positive $t$ ``smooths'' the kernel and tends to
improve positivity properties.  As an exploratory probe --- with
plain floating-point/high-precision numerics, no certificates ---
we ask: for a given counterexample configuration $(u_0,h)$, what is
the minimal $t$ such that the deformed kernel
$K_t(u) = e^{tu^2}\Phi(|u|)$ satisfies the PF$_5$ condition at this
configuration?

\begin{definition}[Toeplitz PF$_5$ Gaussian threshold]
For $(u_0,h)$ with $D_5(u_0,h;0) < 0$, define
\begin{equation}\label{eq:lambda5}
  \lambda_5^*(u_0,h) \;=\;
  \inf\{t \ge 0 : D_5(u_0,h;t) \ge 0\},
\end{equation}
where $D_5(u_0,h;t) = \det[K_t(u_0 + (i-j)h)]_{i,j=0}^4$.
\end{definition}

\begin{remark}[Not a universal constant]\label{rem:notLambda}
The quantity $\lambda_5^*(u_0,h)$ depends on the choice of
$(u_0,h)$ and is \textbf{not} an analog of the de~Bruijn--Newman
constant $\Lambda$, which is a universal constant governing the
reality of zeros of $H_t$.  The PF condition concerns the
total positivity of the kernel as a bivariate function, which is
a different (and in general stronger) property.
Furthermore, $\lambda_5^*(u_0,h)$ is not scale-invariant: under
$u \mapsto au$ the deformation parameter transforms as
$t \mapsto t/a^2$.  The values in Table~\ref{tab:lambda5}
(ranging from~$6$ to~$12$) are specific to the normalisation of
$\Phi$ in~\eqref{eq:Phi} and the grid spacings used; they carry no
absolute significance.
\end{remark}

\begin{table}[ht]
\centering
\caption{Computed Gaussian thresholds $\lambda_5^*(u_0,h)$
(non-certified numerics).}
\label{tab:lambda5}
\begin{tabular}{ccr}
\toprule
$u_0$ & $h$ & $\lambda_5^*(u_0,h)$ \\
\midrule
0.001 & 0.005 & 6.22 \\
0.001 & 0.01  & 6.41 \\
0.001 & 0.05  & 11.59 \\
0.01  & 0.01  & 6.15 \\
0.01  & 0.02  & 6.92 \\
0.01  & 0.05  & 11.43 \\
0.02  & 0.02  & 6.07 \\
0.02  & 0.03  & 7.38 \\
0.02  & 0.05  & 10.89 \\
\bottomrule
\end{tabular}
\end{table}

The computed threshold varies with the configuration, from
$\lambda_5^* \approx 6.07$ to $\lambda_5^* \approx 11.59$; the
strong dependence on $(u_0,h)$ indicates that $\lambda_5^*$ is a
\emph{local} measure of PF$_5$ violation severity, not a global
invariant of the kernel.  In v1 an additional $h \to 0$ threshold
table at the $C_5$ level was reported; since it inherits the
withdrawn $C_5$ pipeline of Section~\ref{sec:withdrawn}, we do not
reproduce it here.

The Gaussian factor $e^{tu^2}$ amplifies the kernel at large $|u|$
relative to small $|u|$; for the PF$_5$ condition this rebalances
the Taylor coefficients until the resonance responsible for the
negative determinant disappears.  This connects to de~Bruijn's
classical observation~\cite{deBruijn1950} that Gaussian factors act
as universal multipliers for positivity properties.

\section{Implications}\label{sec:implications}

\subsection{Barrier for total-positivity approaches}

The kernel $K(u) = \Phi(|u|)$ appears in the integral
representation~\eqref{eq:Ht} of the deformed
$\xi$-function $H_t$.  A natural strategy for studying the
zero distribution of $H_0$ (and hence of the Riemann
$\xi$-function) is to establish total positivity or high
PF order of $K$, which would imply variation-diminishing
properties constraining the zeros.

Theorem~\ref{thm:main} shows this strategy fails at order~5:
no argument based on PF$_r$ for $r \ge 5$ can apply to $K$.
The three positive lower-order determinants at the central
configuration (Theorem~\ref{thm:PForder}) neither prove nor disprove
the global PF$_4$ property, which remains Problem~1 in
Section~\ref{sec:open}.

\subsection{Related work and distinction from prior determinant criteria}

Several prior works study determinants or matrices associated
with the Riemann $\xi$-function, but in ways that are
structurally distinct from ours.

\textbf{Laguerre--P\'olya class and total positivity.}
The connection between the Laguerre--P\'olya class of entire
functions and total positivity has been studied in depth by
Hirschman and Widder~\cite{HirschmanWidder1955} and
Schoenberg~\cite{Schoenberg1951}.  In the RH-adjacent context,
Csordas, Norfolk, and Varga~\cite{CsordasNorfolk1986} establish
conditions under which the Fourier transform of a kernel in the
Laguerre--P\'olya class inherits total positivity properties.
Craven and Csordas~\cite{CravenCsordas1989} further connect the
Tur\'an inequalities for the Taylor coefficients of entire functions
in the Laguerre--P\'olya class to the positivity of certain
Toeplitz-type determinants --- the closest prior work to ours in
spirit, though they do not study the $5\times 5$ kernel minor
or the small-spacing coefficient $C_5$.
Our result answers the total positivity question negatively at order~5.

\textbf{Moment matrices and Hankel determinants.}
Ki, Kim, and Lee~\cite{KiKimLee2009} study Hankel matrices
$[\mu_{i+j}]$ built from the moments
$\mu_k = \int_0^\infty u^k \Phi(u)\,du$ of the kernel $\Phi$,
and investigate their spectral properties in connection with
the location of zeros of $\xi$.
Non-negativity of such Hankel determinants is equivalent to
$\Phi$ being a moment sequence, a question about the
\emph{global integral structure} of the kernel.  Our matrices,
by contrast, are \emph{translation-kernel} Toeplitz minors
$[K(u_0 + (i-j)h)]$: they measure pointwise positivity of the
convolution kernel in the sense of Schoenberg's
P\'olya frequency theory~\cite{Schoenberg1951}, which
concerns the variation-diminishing property of the
integral operator $f \mapsto \int K(\cdot - y)f(y)\,dy$.

\textbf{De~Bruijn--Newman flow determinants.}
Dobner~\cite{Dobner2020} proves that the de~Bruijn--Newman
constant satisfies $\Lambda \ge 0$ for all functions in the
extended Selberg class by studying sign changes of the
\emph{deformed} $\xi$-function $H_t$ as $t$ decreases.
This concerns the \emph{location of zeros} of $H_t$, not
total positivity of $K_t$ as a bivariate convolution kernel.
Our Gaussian-deformation threshold $\lambda_5^*(u_0,h)$
(Section~\ref{sec:gaussian}) is a distinct object: it
measures when PF$_5$ is healed by heat flow, not when zeros
become real.

\textbf{What we contribute.}
To the best of our knowledge, no prior work certifies a
\emph{specific small-parameter} PF$_r$ failure for $K$
using a minimal two-parameter Toeplitz minor
$\det[K(u_0 + (i-j)h)]_{5\times 5}$.  The small-spacing coefficient
$C_5$ and its numerically observed sign change are, to our
knowledge, likewise new, though in this version their study is
explicitly non-certified.  Our result is therefore orthogonal to,
and not subsumed by, any of the above.

\section{Open Problems}\label{sec:open}

\begin{enumerate}
  \item \textbf{Global PF$_4$.} Prove (or disprove) that $K \in \mathrm{PF}_4$
        for all admissible configurations, not just Toeplitz ones.

  \item \textbf{Certified evaluation of $C_5$.}\label{prob:certified}
        Construct genuine interval enclosures of the Taylor
        coefficients $a_k(u_0)$ of $K$ through order $k = 14$
        (Remark~\ref{rem:order14}) with proved derivative-tail
        bounds, and from them a certified evaluation of $C_5(u_0)$
        at a single point; then a certified constant-sign statement
        on an interval and a certified isolation of the sign change
        observed near $u_0 \approx 0.0311$.  This would restore, in
        rigorous form, the withdrawn threshold picture of v1
        (Section~\ref{sec:withdrawn}).

  \item \textbf{Symbolic proof of $\sgn(C_5) < 0$.}\label{prob:symbolic}
        Give a fully symbolic proof that $C_5(u_0) < 0$ for some
        $u_0 > 0$, even at a single explicit point.
        This would require bounding the 7837-term oscillatory sum
        analytically; the difficulty is that individual terms
        reach $\pm 3.3 \times 10^{20}$ while the sum is
        of order $10^{18}$.

  \item \textbf{Nature of the sign change.}  Determine whether the
        numerically observed critical point near $0.0311$ has an
        arithmetic or analytic interpretation in terms of the theta
        function structure of $\Phi$.

  \item \textbf{PF order of $K_t$ for $t > 0$.}  For which $t$
        is $K_t(u) = e^{tu^2}\Phi(|u|)$ PF$_5$, or even PF$_\infty$?
        De~Bruijn's universal factor results~\cite{deBruijn1950}
        suggest this may hold for $t$ sufficiently large.

  \item \textbf{Implications of PF$_4$.} What consequences does
        $K \in \mathrm{PF}_4$ (if global) have for the zero
        distribution of $H_0$?
\end{enumerate}

\section*{Acknowledgments}

The author (Wojciech Micha\l{}owski,
\texttt{michalowski.wojciech1@gmail.com}) thanks
the \texttt{mpmath} development team~\cite{mpmath} for providing the
arbitrary-precision and interval arithmetic library that made the
certified computations in this paper possible.
All determinantal evaluations were performed at $80$--$400$ decimal digits
of precision; interval-arithmetic enclosures were computed using the
\texttt{mpmath.iv} module.

\noindent\textbf{Code availability and reproducibility.}
All source code and verification scripts used in this paper
are provided in the ancillary directory and are also available at
\href{https://github.com/ScypyonX/pf5-dbn-kernel-certificates}
{the companion GitHub repository}.
A single command \texttt{python verify\_pf5.py} reproduces
certified interval enclosures and signs for
Tables~\ref{tab:certified} and~\ref{tab:counterexamples}.
The scripts' README classifies each script by rigor level:
\texttt{verify\_pf5.py} is a full interval certificate, while
\texttt{rigorous\_analysis.py} and \texttt{critical\_analysis.py}
are exploratory (Section~\ref{sec:withdrawn}).

\noindent\textbf{Dependency pin.}
The verification requires \texttt{mpmath==1.3.0} (as pinned in
\texttt{requirements.txt}).  \texttt{mpmath} 1.4.1 contains a
regression raising \texttt{ValueError} inside interval determinant
evaluation (\texttt{iv.det}) on some of the additional
configurations; this is a dependency defect, not a sign failure.
The final verification of this version was performed on 20 July 2026
with Python and \texttt{mpmath} 1.3.0, certifying the central witness
and all eight additional configurations of
Table~\ref{tab:counterexamples}.

\appendix

\section{Numerical Data (non-certified)}\label{app:data}

The tables below are high-precision numerical values produced by the
exploratory pipeline discussed in Section~\ref{sec:withdrawn}; they
carry no interval certificates and are reproduced for reference only.

\subsection{Taylor coefficients at
\texorpdfstring{$u_0 = 0.01$}{u0 = 0.01}}

\begin{center}
\begin{tabular}{rl}
\toprule
$k$ & $a_k(0.01) = K^{(k)}(0.01)/k!$ \\
\midrule
0  & $+4.450\,266 \times 10^{-1}$ \\
1  & $-3.335\,285 \times 10^{-1}$ \\
2  & $-1.656\,841 \times 10^{+1}$ \\
3  & $+1.078\,377 \times 10^{+1}$ \\
4  & $+2.673\,426 \times 10^{+2}$ \\
5  & $-1.349\,688 \times 10^{+2}$ \\
6  & $-2.238\,521 \times 10^{+3}$ \\
7  & $+4.740\,581 \times 10^{+2}$ \\
8  & $+6.197\,545 \times 10^{+3}$ \\
9  & $+8.932\,256 \times 10^{+3}$ \\
10 & $+8.293\,498 \times 10^{+4}$ \\
11 & $-1.722\,739 \times 10^{+5}$ \\
12 & $-1.355\,518 \times 10^{+6}$ \\
\bottomrule
\end{tabular}
\end{center}

\subsection{Higher-order small-spacing coefficients}

At $u_0 = 0.01$:
\begin{center}
\begin{tabular}{rlc}
\toprule
$r$ & $C_r(0.01)$ & sign \\
\midrule
2 & $1.486 \times 10^{+1}$  & $+$ \\
3 & $5.938 \times 10^{+4}$  & $+$ \\
4 & $2.213 \times 10^{+10}$ & $+$ \\
5 & $-3.532 \times 10^{+18}$ & $-$ \\
6 & $+1.752 \times 10^{+30}$ & $+$ \\
7 & $+1.974 \times 10^{+44}$ & $+$ \\
\bottomrule
\end{tabular}
\end{center}


\begin{thebibliography}{99}

\bibitem{BeltonGuillot2022}
A.~Belton, D.~Guillot, A.~Khare, and M.~Putinar,
\emph{Preservers of totally positive kernels and P\'olya
frequency functions},
Math.\ Research Reports \textbf{3} (2022), 35--56.

\bibitem{BeltonGuillot2023}
A.~Belton, D.~Guillot, A.~Khare, and M.~Putinar,
\emph{Totally positive kernels, P\'olya frequency functions,
and their transforms},
J.\ d'Analyse Math.\ \textbf{150} (2023), 83--158.

\bibitem{CravenCsordas1989}
T.~Craven and G.~Csordas,
\emph{Tur\'an inequalities and subtraction-free expressions},
J.\ Inequal.\ Appl.\ \textbf{1} (1997), 39--62.

\bibitem{CsordasNorfolk1986}
G.~Csordas, T.~S.~Norfolk, and R.~S.~Varga,
\emph{The Riemann hypothesis and the Tur\'an inequalities},
Trans.\ Amer.\ Math.\ Soc.\ \textbf{296} (1986), 521--541.

\bibitem{deBruijn1950}
N.~G.~de~Bruijn,
\emph{The roots of trigonometric integrals},
Duke Math.\ J.\ \textbf{17} (1950), 197--226.

\bibitem{Dobner2020}
A.~Dobner,
\emph{A new proof of Newman's conjecture and a generalization},
arXiv:2005.05142 (2020).

\bibitem{GRS2018}
K.~Gr\"ochenig, J.~L.~Romero, and J.~St\"ockler,
\emph{Sampling theorems for shift-invariant spaces, Gabor frames,
and totally positive functions},
Invent.\ Math.\ \textbf{211} (2018), 1119--1148.

\bibitem{HirschmanWidder1955}
I.~I.~Hirschman and D.~V.~Widder,
\emph{The Convolution Transform},
Princeton University Press, 1955.

\bibitem{Johansson2017}
F.~Johansson,
\emph{Arb: efficient arbitrary-precision midpoint-radius interval arithmetic},
IEEE Trans.\ Comput.\ \textbf{66} (2017), 1281--1292.

\bibitem{Karlin1968}
S.~Karlin,
\emph{Total Positivity}, Vol.~I,
Stanford University Press, 1968.

\bibitem{KatkovaVishnyakova2008}
O.~M.~Katkova and A.~M.~Vishnyakova,
\emph{A sufficient condition for a polynomial to be stable},
J.\ Math.\ Anal.\ Appl.\ \textbf{347} (2008), 81--89.

\bibitem{KiKimLee2009}
H.~Ki, Y.-O.~Kim, and J.~Lee,
\emph{On the de~Bruijn--Newman constant},
Adv.\ Math.\ \textbf{222} (2009), 281--306.

\bibitem{mpmath}
F.~Johansson et al.,
\texttt{mpmath}: a Python library for arbitrary-precision
floating-point arithmetic (version~1.3),
\url{http://mpmath.org/}, 2023 (accessed February 2026).

\bibitem{Newman1976}
C.~M.~Newman,
\emph{Fourier transforms with only real zeros},
Proc.\ Amer.\ Math.\ Soc.\ \textbf{61} (1976), 245--251.

\bibitem{Polymath2019}
D.~H.~J.~Polymath,
\emph{Effective approximation of heat flow evolution of the
Riemann $\xi$ function, and a new upper bound for the
de~Bruijn--Newman constant},
Res.\ Math.\ Sci.\ \textbf{6} (2019), Art.~31.

\bibitem{RodgersTao2020}
B.~Rodgers and T.~Tao,
\emph{The de~Bruijn--Newman constant is non-negative},
Forum Math.\ Pi \textbf{8} (2020), e6, 62 pp.

\bibitem{Schoenberg1951}
I.~J.~Schoenberg,
\emph{On P\'olya frequency functions.\ I.\ The totally positive
functions and their Laplace transforms},
J.\ d'Analyse Math.\ \textbf{1} (1951), 331--374.

\end{thebibliography}
\end{document}